\documentclass[final,3p,times]{elsarticle}

\usepackage[utf8]{inputenc}
\usepackage{amsmath}
\usepackage{amsfonts}
\usepackage{amssymb}
\usepackage{amsthm}
\usepackage{graphicx}
\usepackage{easybmat}
\usepackage{pifont} 
\usepackage{enumitem}
\usepackage{float}
\usepackage{url}
\usepackage{subcaption}
\usepackage{bm}

\usepackage{mathtools}
\mathtoolsset{showonlyrefs}

\biboptions{sort&compress}

\usepackage{color}

\newcommand{\barpow}[1]{\left\lfloor#1\right\rceil}
\newcommand{\sign}[1]{\mbox{sign}(#1)}

\def\Red#1{\textcolor{red}{#1}}

\usepackage{ragged2e}

\usepackage{natbib}

%
\newcommand{\abs}[1]{\left\lvert#1\right\rvert}
\newcommand{\norm}[1]{\left\lvert\left\lvert#1\right\rvert\right\rvert}
\setlength{\unitlength}{1cm}

\newtheorem{theorem}{Theorem}

\newtheorem{definition}{Definition}
\newtheorem{example}{Example}
\newtheorem{lemma}{Lemma}

\newtheorem{proposition}{Proposition}
\newtheorem{remark}{Remark}


\setlength{\unitlength}{1cm}

\def\sign{\hskip2pt{\rm sign}\hskip0pt}


\journal{ArXiv}

\begin{document}

\begin{frontmatter}



\title{Enhancing the settling time estimation of a class of fixed-time stable systems
\footnote{\Red{This is the preprint version of the accepted Manuscript: R. Aldana-López, D. Gómez-Gutiérrez, E. Jiménez-Rodríguez, J. D. Sánchez-Torres and Michael Defoort, “Enhancing the settling time estimation of a class of fixed-time stable systems”, International Journal of Robust and Nonlinear Control, 2019, ISSN: 1099-1239. DOI. 10.1002/rnc.4600.
Please cite the publisher's version. For the publisher's version and full citation details see:
\url{http://dx.doi.org/10.1002/rnc.4600}
}}
}



\author[label0]{R.~Aldana-L\'opez}
\ead{rodrigo.aldana.lopez@gmail.com}

\author[label0,label1]{David~Gómez--Gutiérrez\corref{cor1}}
\ead{David.Gomez.G@ieee.org}
\cortext[cor1]{Corresponding Author.}

\author[label2]{E.~Jim\'enez-Rodr\'iguez}
\ead{ejimenezr@gdl.cinvestav.mx}

\author[label3]{J.~D.~S\'anchez-Torres}
\ead{dsanchez@iteso.mx}

\author[label4]{M.~Defoort}
\ead{michael.defoort@univ-valenciennes.fr}

\address[label0]{Multi-agent autonomous systems lab, Intel Labs, Intel Tecnología de M\'exico, Av. del Bosque 1001, Colonia El Bajío, Zapopan, 45019, Jalisco, M\'exico.}
\address[label1]{Tecnologico de Monterrey, Escuela de Ingenier\'ia y Ciencias, Av. General Ram\'on Corona 2514, Zapopan, 45201, Jalisco, M\'exico.}
\address[label2]{CINVESTAV, Unidad Guadalajara, Av. del Bosque 1145, colonia el Bajío, Zapopan , 45019, Jalisco, M\'exico.}
\address[label3]{Research Laboratory on Optimal Design, Devices and Advanced Materials -OPTIMA-, Department of Mathematics and Physics, ITESO, Perif\'erico Sur Manuel G\'omez Mor\'in 8585 C.P. 45604, Tlaquepaque, Jalisco, M\'exico.}
\address[label4]{LAMIH, CNRS UMR 8201, Univ. Valenciennes, Valenciennes 59313, France.}

\begin{abstract}
This paper deals with the convergence time analysis of a class of fixed-time stable systems with the aim to provide a new non-conservative upper bound for its settling time. Our contribution is fourfold. First, we revisit the well-known class of fixed-time stable systems, given in~\cite{Polyakov2012}, while showing the conservatism of the classical upper estimate of the settling time. Second, we provide the smallest constant that uniformly upper bounds the settling time of any trajectory of the system under consideration. Third, introducing a slight modification of the previous class of fixed-time systems, we propose a new predefined-time convergent algorithm where the least upper bound of the settling time is set \textit{a priori} as a parameter of the system. At last, predefined-time controllers for first order and second order systems are introduced. Some simulation results highlight the performance of the proposed scheme in terms of settling time estimation compared to existing methods.
\end{abstract}

\begin{keyword}
Predefined-time stability, Finite-time stability, Fixed-time stability, Lyapunov analysis.
\end{keyword}

\end{frontmatter}

\section{Introduction}
Convergence time is an important performance specification for a controlled system from a practical point of view \cite{lopez2018fixed,dvir2018acceleration}. Indeed, the design of controllers which guarantee predefined-time stability, instead of asymptotic stability, is one of the desired objectives, which appear in many applications such as missile guidance \cite{songtime}, hybrid formation flying \cite{Zuo2018}, group consensus \cite{shang2017fixed}, online differentiators~\cite{Cruz-Zavala2011,Angulo2013,Basin2016}, state observers~\cite{lopez2018fixed}, etc. Furthermore, in the case of switching systems, it is frequently required that the observer (or controller) achieves the stability of the observation error (or tracking error) before the next switching \cite{defoort2011robust,Gomez2015}.      

Lack of uniform boundedness of the settling-time function regardless of the initial conditions causes several restrictions to the practical application of finite time observer/controller \cite{Haimo1986,Bhat2000,Moulay2006}. These restrictions can be relaxed using the fixed-time stability concept. It is an extension of global finite-time stability, and guarantees the convergence (settling) time to be globally uniformly bounded, i.e., the bound does not depend on the initial state of the system \cite{Andrieu2008,Polyakov2012,songtime}. To this end, the class of systems
\begin{equation}
\label{Eq:HomFixedPoly}
\dot{x} = -(\alpha|x|^{p} + \beta|x|^{q})^k \sign{x}, \ \ x(0)=x_0,
\end{equation}
where $x$ is a scalar state variable and the real numbers $\alpha,\beta,p,q,k>0$ are system parameters which satisfy the constraints $kp<1$, and $kq>1$, was proposed in~\cite{Andrieu2008,Polyakov2012} and has been extensively used. Indeed, it represents a wide class of systems which present the fixed-time stability property through homogeneity and Lyapunov analysis frameworks. However, it is still difficult to derive a relatively simple relationship between the system parameters and the upper bound of the settling time \cite{Jimenez-Rodriguez2017,Jimenez-Rodriguez2018}. This yields some difficulties in the tuning of the system parameters to achieve a prescribed-time stabilization (see for instance \cite{Cruz-Zavala2011}).    

The computation of the least upper bound of the settling time is usually not an easy task. Therefore, it is common to propose an upper bound of the settling time as an attempt to estimate the least upper bound. For instance, in~\cite{Polyakov2012}, it is shown that for system~\eqref{Eq:HomFixedPoly}, the settling-time function $T(x_0)$ is bounded as 
\begin{equation}
\label{Eq:UpperEstimate}  
T(x_0)\leq \frac{1}{\alpha^k(1-pk)}+\frac{1}{\beta^k(qk-1)}=T_{\text{max}},\ \ \ \forall x_0\in\mathbb{R}. 
\end{equation} 
However, this bound significantly overestimates the least upper one. This overestimation can lead to restrictions for the practical implementation of prescribed-time observer/controller. In this case, the gains will be over-tuned to achieve a prescribed-time stabilization. It may lead to poor performances in terms of control magnitude or robustness against measurement noise for instance.  


Considering the extensive use of the class of systems represented by~\eqref{Eq:HomFixedPoly} and the overestimation exhibited by $T_{\text{max}}$ in~\eqref{Eq:UpperEstimate}, this paper addresses the computation of the least upper bound of the settling-time function for this system and the derivation of a new predefined-time convergent algorithm where the least upper bound of the settling time is set \textit{a priori} as a parameter of the system. Based on this result, new predefined-time controllers for first order and second order systems are introduced to enhance the settling time estimation. In contrast to the common approaches of homogeneity and Lyapunov analysis, the results in this paper are derived using the well-known geometric conditions proposed in~\cite{Haimo1986}.
The overall contribution of this paper is divided into the following four main results:
\begin{enumerate}
    \item The well-known class of fixed-time stable systems~\eqref{Eq:HomFixedPoly} is revisited. For this class of systems, the least upper bound of the settling-time function, \[\gamma=\frac{\Gamma \left(m_p\right) \Gamma \left(m_q\right)}{\alpha^{k}\Gamma (k) (q-p)}\left(\frac{\alpha}{\beta}\right)^{m_p},\] with $m_p=\frac{1-kp}{q-p}$ and $m_q=\frac{kq-1}{q-p}$, is found.
    \item The following new predefined-time convergent system, where the least upper bound of the settling time $T_c$ is set \textit{a priori} as a parameter of the system, is proposed:
    \begin{equation}
    \label{Eq:HomFixed}
    \dot{x} = -\frac{\gamma}{T_c}(\alpha|x|^{p} + \beta|x|^{q})^k \sign{x}, \ \ x(0)=x_0,
    \end{equation}
     where $x$ is a scalar state variable, real numbers $\alpha,\beta,p,q,k>0$ are system parameters which satisfy the constraints $kp<1$ and $kq>1$ and $T_c>0$. Notice that the only difference between \eqref{Eq:HomFixedPoly} and the modified system \eqref{Eq:HomFixed} is the constant gain $\gamma/T_c$. This slight change in the original system represents a considerable improvement in its properties, since the tunable parameter $T_c$ is directly the least upper bound of the convergence time. 
     As a consequence of this desirable feature, we say that the origin of system~\eqref{Eq:HomFixed} is predefined-time stable with (strong) predefined-time $T_c$, a notion formally defined in Section \ref{Sec:Preliminaries}.
    \item The bound given in the formula~\eqref{Eq:UpperEstimate} for system~\eqref{Eq:HomFixed} is shown to be a conservative estimation of the settling time $T_f=\sup_{x_0\in\mathbb{R}}T(x_0)=T_c$. Moreover, letting $\alpha=\varrho$ and $\beta=\frac{1}{\varrho}$, it is shown that even if the least upper bound of the convergence time is $T_c$, the upper estimate~\eqref{Eq:UpperEstimate}, given in~\cite{Polyakov2012} goes to infinity as $\varrho\to+\infty$ and as $\varrho\to0$.
\item New predefined-time controllers for first order and second order scalar systems with matched bounded perturbations are introduced to enhance the settling time estimation.
\end{enumerate}

The rest of the manuscript is organized as follows. In Section~\ref{Sec:Preliminaries}, we introduce the preliminaries on finite-time, fixed-time and predefined-time stability. In Section~\ref{Sec:MainResult}, we present the main result on the least upper bound for the settling time and propose a new strongly predefined-time convergent algorithm where the least upper bound of the settling time is set \textit{a priori} as a parameter of the system. We also present the analysis of how conservative the bound provided in~\cite{Polyakov2012} may result and show some numerical results. In Section \ref{Sec:stabilization}, we apply the previous result to derive new predefined-time controllers for first and second order systems. Finally, in Section~\ref{Sec:Conclusion}, we present some concluding remarks.

\section{Preliminaries and Definitions}
\label{Sec:Preliminaries}
Consider the nonlinear system
\begin{equation}\label{Eq:NonSystemDyn}
    \dot{x}=f(x;\rho), \ \ x(0)=x_0,
\end{equation}
where $x\in\mathbb{R}^n$ is the system state, the vector $\rho\in\mathbb{R}^b$ stands for the system~\eqref{Eq:NonSystemDyn} parameters which are assumed to be constant, i.e., $\dot{\rho}=0$. The function $f:\mathbb{R}^n\rightarrow\mathbb{R}^n$ is assumed to be nonlinear and continuous, and the origin is assumed to be an equilibrium point of system~\eqref{Eq:NonSystemDyn}, so $f(0;\rho)=0$.

Let us first recall some useful definitions and lemma on finite-time, fixed-time and predefined-time stability.

\begin{definition}(Lyapunov stability~\cite[Definition~4.1]{Khalil2002})\label{def:lyapstab}
The origin of system \eqref{Eq:NonSystemDyn} is said to be \textit{Lyapunov stable} if for all $\epsilon>0$, there is $\delta:=\delta(\epsilon)>0$ such that for all $\norm{x_0}<\delta$, any solution $x(t,x_0)$ of~\eqref{Eq:NonSystemDyn} exists for all $t\geq0$, and $\norm{x(t,x_0)}<\epsilon$ for all $t\geq0$.
\end{definition}

\begin{definition}(Finite-time stability~\cite{Bhat2000})\label{def:finite} The origin of \eqref{Eq:NonSystemDyn} is said to be \textit{globally finite-time stable} if it is Lyapunov stable, and for any $x_0\in\mathbb{R}^n$, there exists $0\leq T < +\infty$ such that the solution $x(t,x_0)=0$ for all $t\geq T$. The function $T(x_0)=\inf\{T:x(t,x_0)=0, \forall t\geq T\}$ is called the \textit{settling-time function}.
\end{definition}

\begin{lemma}(Finite-time stability characterization for scalar systems~\cite[Fact~1]{Haimo1986}) \label{lem:haimo} Let $n=1$ in system~\eqref{Eq:NonSystemDyn} (scalar system). The origin is globally finite-time stable if and only if for all $x\in\mathbb{R}\setminus\{0\}$
\begin{itemize}
    \item[\textit{(i)}] $xf(x;\rho)<0$, and
    \item[\textit{(ii)}] $\int_{x}^{0}\frac{\text{d}z}{f(z;\rho)}<+\infty.$
\end{itemize}
\end{lemma}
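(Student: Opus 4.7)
The plan is to exploit the one-dimensional geometry, in which condition (i) forces every nontrivial trajectory to move monotonically toward the origin and condition (ii) evaluates, via separation of variables, to the total time that trajectory spends on the way. The two directions of the equivalence then follow symmetrically once these ingredients are in place.

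For the forward implication, I would first derive (i) by contradiction. Suppose some $x^{*}>0$ satisfies $f(x^{*};\rho)\ge 0$. If $f(x^{*};\rho)=0$, then $x(t)\equiv x^{*}$ is a solution of \eqref{Eq:NonSystemDyn} emanating from $x^{*}$, and it never reaches the origin. If $f(x^{*};\rho)>0$, then any solution starting at $x^{*}$ enters $(x^{*},\infty)$ immediately, so in order to reach $0$ it must re-cross $x^{*}$ at some first time $t_{1}>0$; minimality of $t_{1}$ together with the strict positivity of the trajectory on $(0,t_{1})$ forces $\dot x(t_{1})\le 0$, contradicting $\dot x(t_{1})=f(x^{*};\rho)>0$. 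A mirror argument rules out the case $x^{*}<0$, giving (i). Once (i) holds, trajectories are strictly monotone toward the origin, and separation of variables along the orbit yields $T(x_{0})=\int_{x_{0}}^{0}\mathrm dz/f(z;\rho)$; finiteness of this quantity for every $x_{0}$ is exactly (ii).

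For the converse, Lyapunov stability is immediate from (i): since $|x(t)|$ is non-increasing along any solution, the choice $\delta=\epsilon$ suffices. Finite-time convergence then follows from the same separation-of-variables identity, which by (ii) gives a finite hitting time $T(x_{0})=\int_{x_{0}}^{0}\mathrm dz/f(z;\rho)<+\infty$. The only loose end is that the trajectory must remain at $0$ after hitting it, and I would close this by observing that a solution escaping from $0$ with, say, $x(t)>0$ for $t>t^{*}$ near some $t^{*}$ would give $t-t^{*}=\int_{0}^{x(t)}\mathrm dz/f(z;\rho)<0$ by (i), which is impossible.

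The step I expect to be the main obstacle is the forward derivation of (i) in the subcase $f(x^{*};\rho)>0$, because $f$ is assumed only continuous, so solutions need not be unique and standard ODE machinery cannot be invoked blindly. The cleanest remedy is to phrase the argument for an arbitrary solution through $x^{*}$ and to extract the sign contradiction at the minimal re-crossing time $t_{1}$; the same lack of uniqueness is precisely why the non-escape from the origin in the reverse direction has to be verified by a direct integral computation rather than quoted from an existence-uniqueness theorem.
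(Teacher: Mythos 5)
Your proof is correct, but note that the paper does not actually prove this lemma: it is quoted from Haimo's work, the proof is explicitly deferred to Moulay and Perruquetti, and the paper only records in a remark the intuition for sufficiency (condition \textit{(i)} gives Lyapunov stability; the change of variables $t\mapsto x(t,x_0)$ turns $\int_0^{T(x_0)}\mathrm{d}t$ into $\int_{x_0}^{0}\mathrm{d}z/f(z;\rho)$, so \textit{(ii)} is finiteness of the settling time). Your sufficiency argument is exactly this sketch, filled in, together with the non-escape verification at the origin; your necessity argument (deriving \textit{(i)} by the sign contradiction at the first re-crossing time, then reading \textit{(ii)} off the settling-time integral) is material the paper does not even outline, and it is sound. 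Two small points you could tighten. First, in the subcase $f(x^{*};\rho)=0$ with $x^{*}\neq 0$, the contradiction with Definition~\ref{def:finite} implicitly reads ``the solution'' as ``every solution''; since $f$ is only continuous and solutions need not be unique, it is worth saying explicitly that the constant solution through $x^{*}$ is one solution that never reaches the origin, which is what violates global finite-time stability. Second, the non-escape step is cleaner without the signed-time integral: if $x(t^{*})=0$ and $x>0$ on $(t^{*},t]$, then \textit{(i)} forces $\dot x=f(x)<0$ there, so $x$ is strictly decreasing on $(t^{*},t]$ while tending to $0$ as $t\downarrow t^{*}$ by continuity, which is already impossible; your integral version reaches the same contradiction but quietly uses this monotonicity to justify the change of variables, so you may as well state it directly.
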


\begin{remark} A proof of Lemma~\ref{lem:haimo} shall not be given here, but can be found in \cite[Lemma~3.1]{Moulay2008}. Nevertheless, intuitively, condition \textit{(i)} implies Lyapunov stability. Moreover, under the conditions of Lemma~\ref{lem:haimo}, note that the settling time function is $T(x_0)=\int_{0}^{T(x_0)}\text{d}t$. Since first-order systems do not oscillate, the solution $x(\cdot,x_0):[0,T(x_0))\to[x_0,0)$ of system~\eqref{Eq:NonSystemDyn} as a function of $t$ defines a bijection. Using it as a variable change, the above integral equals (note that $\frac{1}{f(x;\rho)}$ is defined for all $x\in\mathbb{R}\setminus\{0\}$ from condition \textit{(i)}
\begin{equation}\label{eq:settling_scalar}
    T(x_0)=\int_{0}^{T(x_0)}\text{d}t=\int_{x_0}^{0}\frac{\text{d}x}{f(x;\rho)}.
\end{equation} 
Thus, condition \textit{(ii)} of Lemma~\ref{lem:haimo} refers to the settling-time function being finite.
\end{remark}

\begin{definition}(Fixed-time stability~\cite{Polyakov2012}) \label{def:fixed} The origin is said to be a \textit{fixed-time stable equilibrium} of~\eqref{Eq:NonSystemDyn} if it is globally finite-time-stable and the settling time function $T(x_0)$ is bounded on $\mathbb{R}^n$, i.e. $\exists T_{\text{max}}>0:\forall x_0\in\mathbb{R}^n:T(x_0)\leq T_{\text{max}}$.
\end{definition}

\begin{remark} Let the origin $x=0$ of  system~\eqref{Eq:NonSystemDyn} be fixed-time stable. Notice that there are multiple upper bounds of the settling-time function $T_\text{max}$; for instance, if $T(x_0)\leq T_\text{max}$, also  $T(x_0)\leq \lambda T_\text{max}$ with $\lambda\geq 1$. However, from this boundedness condition, the least upper bound of the settling-time function $\sup_{x_0 \in \mathbb{R}^n} T(x_0)$ exists.
\end{remark}


\begin{remark}
\label{Rem:Predef}
It has been shown that fixed-time stability is guaranteed if the vector field of the system is homogeneous in the bi-limit, a concept defined in~\cite{Andrieu2008,Polyakov2016}. However, in these cases, the upper bound for the settling time is usually not obtained. To differentiate this case to one where a settling time bound $T_c$ is set in advance as a function of system parameters $\rho$, i.e. $T_c=T_c(\rho)$, we introduce the concept of predefined-time stability. A strong notion of this class of stability is given when $\sup_{x_0 \in \mathbb{R}^n} T(x_0)=T_c$, i.e., $T_c$ is the least upper bound for the settling time.
\end{remark}

\begin{definition}(Predefined-time stability~\cite{Sanchez-Torres2018})\label{def:predefined} For the parameter vector $\rho$ of system~\eqref{Eq:NonSystemDyn} and an arbitrarily selected constant $T_c:=T_c(\rho)>0$, the origin of~\eqref{Eq:NonSystemDyn} is said to be \textit{predefined-time stable} if it is fixed-time stable and the settling-time function $T:\mathbb{R}^n\rightarrow\mathbb{R}$ is such that \[T(x_0)\leq T_c, \quad \forall x_0\in\mathbb{R}^n.\] If this is the case, $T_c$ is called a \textit{predefined time}. Moreover, if the settling-time function is such that $\sup_{x_0 \in \mathbb{R}^n} T(x_0)=T_c$, then $T_c$ is called the \textit{strong predefined time}.
\end{definition}

\begin{remark} The stability property, of any kind, refers to equilibrium points of a system. However, since this study only focuses on the global stability of the origin of the system under consideration, it may be referred hereafter, without ambiguity, to the stability of the system in the respective sense (asymptotic, fixed-time or predefined-time).
\end{remark}



\section{On the least upper bound for the settling time of a class of fixed-time stable systems}
\label{Sec:MainResult}

\subsection{Least upper bound of the settling-time function of system~\eqref{Eq:HomFixedPoly}}
Let us first revisit the well-known class of fixed-time stable systems. In the following theorem, based on an appropriate use of the Gamma function, the least upper bound of the settling-time function of  system~\eqref{Eq:HomFixedPoly} is provided.

\begin{theorem}
\label{th:tf_poly}
Let
\begin{equation}
\label{Eq:MinUpperEstimate}
\gamma=\frac{\Gamma \left(m_p\right) \Gamma \left(m_q\right)}{\alpha^{k}\Gamma (k) (q-p)}\left(\frac{\alpha}{\beta}\right)^{m_p},    
\end{equation}
where $\Gamma(\cdot)$ is the Gamma function defined as $\Gamma(z)=\int_0^{+\infty} e^{-t}t^{z-1}dt$~\cite[Chapter~1]{Bateman1953}, and $m_p=\frac{1-k p}{q-p}$ and $m_q=\frac{k q-1}{q-p}$ are positive parameters. The origin $x=0$ of system~\eqref{Eq:HomFixedPoly} is fixed-time stable and the settling time function satisfies $\sup_{x_0 \in \mathbb{R}^n} T(x_0)=\gamma$.
\end{theorem}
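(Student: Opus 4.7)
The plan is to use the scalar characterization in Lemma~\ref{lem:haimo} to reduce the settling time to a concrete improper integral, and then evaluate that integral via a substitution that converts it into a Beta function.

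First I would verify the hypotheses of Lemma~\ref{lem:haimo}. With $f(x;\rho)=-(\alpha|x|^p+\beta|x|^q)^k\sign{x}$, condition $(i)$ is immediate since $xf(x;\rho)=-|x|(\alpha|x|^p+\beta|x|^q)^k<0$ for $x\neq 0$. For $(ii)$, by the symmetry $f(-x;\rho)=-f(x;\rho)$, it suffices to treat $x_0>0$, and formula~\eqref{eq:settling_scalar} gives
\begin{equation}
T(x_0)=\int_0^{x_0}\frac{dx}{(\alpha x^p+\beta x^q)^k}.
\end{equation}
Since the integrand is strictly positive, $T$ is strictly increasing in $|x_0|$, so
\begin{equation}
\sup_{x_0\in\mathbb{R}}T(x_0)=\lim_{x_0\to+\infty}T(x_0)=\int_0^{+\infty}\frac{dx}{(\alpha x^p+\beta x^q)^k}.
\end{equation}
The conditions $kp<1$ and $kq>1$ guarantee the integrability at $0$ and $+\infty$ respectively, which simultaneously proves fixed-time stability and gives an explicit formula for the least upper bound.

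The remaining task is to evaluate this integral in closed form. I would factor $(\alpha x^p+\beta x^q)^k=x^{kp}(\alpha+\beta x^{q-p})^k$ and introduce the change of variables $u=(\beta/\alpha)x^{q-p}$, so that $x=(\alpha u/\beta)^{1/(q-p)}$ and $dx=\tfrac{1}{q-p}(\alpha/\beta)^{1/(q-p)}u^{1/(q-p)-1}du$. Collecting exponents, and using $m_p=(1-kp)/(q-p)$ together with the identity $k-m_p=(kq-1)/(q-p)=m_q$, the integral reduces to
\begin{equation}
\frac{(\alpha/\beta)^{m_p}}{(q-p)\alpha^{k}}\int_0^{+\infty}\frac{u^{m_p-1}}{(1+u)^{k}}\,du.
\end{equation}
The last integral is the standard Beta-function representation $B(m_p,m_q)=\Gamma(m_p)\Gamma(m_q)/\Gamma(k)$, since $m_p+m_q=k/k\cdot 1 = $ well, actually $m_p+m_q=k$ fails; the correct identity to invoke is $\int_0^\infty u^{s-1}(1+u)^{-(s+t)}du=\Gamma(s)\Gamma(t)/\Gamma(s+t)$ with $s=m_p$ and $s+t=k$, so $t=m_q$. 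Substituting yields exactly the expression~\eqref{Eq:MinUpperEstimate} for $\gamma$.

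The only nontrivial obstacle is spotting the correct substitution and the algebraic identity $k-m_p=m_q$ (equivalently $m_p+m_q=k$ when weighted appropriately against the Beta integral). Everything else is bookkeeping: verifying Haimo's conditions, using monotonicity of $T(x_0)$ to pass to the supremum, and invoking the standard integral representation of the Beta function.
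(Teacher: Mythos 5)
Your proposal is correct and follows essentially the same route as the paper: verify Haimo's conditions, use monotonicity of $T(x_0)$ in $|x_0|$ to identify the supremum with the improper integral $\int_0^{+\infty}(\alpha x^p+\beta x^q)^{-k}\,dx$, and evaluate it as a Beta function (the paper substitutes $z=(\tfrac{\alpha}{\beta}x^{p-q}+1)^{-1}$ to land on $\int_0^1 z^{m_p-1}(1-z)^{m_q-1}\,dz$, whereas you use the equivalent representation on $[0,+\infty)$). Your parenthetical worry is unfounded: the identity $m_p+m_q=\frac{(1-kp)+(kq-1)}{q-p}=k$ does hold exactly, which is precisely why $\Gamma(m_p+m_q)=\Gamma(k)$ appears in the denominator.
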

\begin{proof} Note that for system~\eqref{Eq:HomFixedPoly}, the field is $f(x;\rho)=-\left(\alpha|x|^{p} + \beta|x|^{q}\right)^k \sign{x}$, where the parameter vector is $\rho=\left[\alpha \, \beta \, p \, q \, k\right]^T\in\mathbb{R}^5$. Furthermore, the product $xf(x;\rho)=- \left(\alpha|x|^{p} + \beta|x|^{q}\right)^k \abs{x}<0$ for all $x\in\mathbb{R}\setminus\{0\}$. Thus, $V(x)=\frac{1}{2}x^2$ is a radially unbounded Lyapunov function for system~\eqref{Eq:HomFixedPoly}, so its origin $x=0$ is Lyapunov stable~\cite[Theorem~4.2]{Khalil2002}.

Now, let $x_0\in\mathbb{R}\setminus\{0\}$ (if $x_0=0$, then $x(t,0)=0$ is the unique solution of~\eqref{Eq:HomFixedPoly} and $T(0)=0$). From~\eqref{eq:settling_scalar}, the settling time function is
\begin{align*}
T(x_0)  &=\int_{x_0}^{0}\frac{\text{d}x}{f(x;\rho)}\\
        &=\int_{0}^{x_0}\frac{\sign{x}\text{d}x}{(\alpha\abs{x}^{p} + \beta\abs{x}^{q})^k}, \qquad z=\abs{x}\\
        &=\int_{0}^{\abs{x_0}}\frac{\text{d}z}{(\alpha z^{p} + \beta z^{q})^k}.
\end{align*}
Since the integrand $\frac{1}{(\alpha z^{p} + \beta z^{q})^k}$ is positive for $z\in\left(0,\abs{x_0}\right)$, the settling time function is increasing with respect to $\abs{x_0}$. Hence, the least upper bound of $T(x_0)$ (in the extended real numbers set, since we do not know yet if it is finite or not) is obtained using Proposition~\ref{prop:integral}, in the Appendix, as \[\sup_{x_0\in\mathbb{R}}T(x_0) = \lim_{|x_0|\to+\infty} T(x_0) = \int_0^{+\infty} \frac{\text{d}z}{(\alpha z^p + \beta z^q)^k}=\gamma,\]
with $\gamma<+\infty$ as in~\eqref{Eq:MinUpperEstimate}. Using Lemma~\ref{lem:haimo} and by the definitions of finite-time and fixed-time stability, the origin $x=0$ of system~\eqref{Eq:HomFixedPoly} is fixed-time stable and $T_f=\gamma$, which completes the proof.
\end{proof}

\subsection{A class of predefined-time stable systems}
Now, the result presented in Theorem~\ref{th:tf_poly} is used to derive a Lyapunov-like condition for characterizing predefined-time stability of a system.

\begin{theorem}(A Lyapunov characterization for predefined-time stable systems)\label{thm:weak_pt}
If there exists a continuous positive definite radially unbounded function $V:\mathbb{R}^n\to\mathbb{R}$ such that any solution $x(t,x_0)$ of~\eqref{Eq:NonSystemDyn} satisfies
\begin{equation}\label{eq:dV_weak}
\dot{V}(x)\leq-\frac{\gamma}{T_c}\left(\alpha V(x)^p+\beta V(x)^q\right)^k, \qquad \forall x\in\mathbb{R}^n\setminus\{0\}.
\end{equation}  
where $\alpha,\beta,p,q,k>0$, $kp<1$, $kq>1$ and $\gamma$ is given in~\eqref{Eq:MinUpperEstimate}. 

Then, the origin of \eqref{Eq:NonSystemDyn} is predefined-time stable and $T_c$ is a predefined time. If in addition, the equality holds in~\eqref{eq:dV_weak}, then $T_c$ is the strong predefined-time.
\end{theorem}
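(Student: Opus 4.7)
The plan is to reduce the vector-valued dynamics to the scalar case already treated in Theorem~\ref{th:tf_poly} via a standard comparison argument. First, I would introduce the one-dimensional comparison system
\begin{equation*}
\dot{y}=-\frac{\gamma}{T_c}\bigl(\alpha y^{p}+\beta y^{q}\bigr)^{k},\qquad y(0)=y_{0}\geq 0,
\end{equation*}
which is exactly the nonnegative branch of system~\eqref{Eq:HomFixed}. Either by a direct time-rescaling of Theorem~\ref{th:tf_poly}, or by reading off the closed-form integral
\begin{equation*}
T_{y}(y_{0})=\int_{0}^{y_{0}}\frac{T_{c}}{\gamma\,(\alpha z^{p}+\beta z^{q})^{k}}\,\mathrm{d}z
\end{equation*}
and noting that the same Gamma-function identity invoked in the proof of Theorem~\ref{th:tf_poly} produces the value $T_{c}$ in the limit $y_{0}\to\infty$, I would conclude that $T_{y}$ is finite on $[0,+\infty)$, monotonically increasing in $y_{0}$, and $\sup_{y_{0}\geq 0}T_{y}(y_{0})=T_{c}$.

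Next, I would invoke the standard comparison principle for scalar differential inequalities. Since $g(v)=-\tfrac{\gamma}{T_{c}}(\alpha v^{p}+\beta v^{q})^{k}$ is continuous and the hypothesis states $\dot{V}(x(t))\leq g(V(x(t)))$ along any solution, setting $y(0)=V(x_{0})$ yields $V(x(t,x_{0}))\leq y(t,V(x_{0}))$ for all $t\geq 0$. In particular, $V(x(t,x_{0}))=0$ for every $t\geq T_{y}(V(x_{0}))$, and the positive definiteness of $V$ forces $x(t,x_{0})=0$ on the same interval. Combined with Lyapunov stability, which follows from the standard theorem because $V$ is a continuous positive-definite radially unbounded function with $\dot{V}\leq 0$, this delivers the bound $T(x_{0})\leq T_{y}(V(x_{0}))\leq T_{c}$, i.e., predefined-time stability with predefined time $T_{c}$ in the sense of Definition~\ref{def:predefined}.

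For the strong predefined-time claim, equality in~\eqref{eq:dV_weak} promotes the comparison-principle inequality to an equality, so $V(x(t,x_{0}))=y(t,V(x_{0}))$ and hence $T(x_{0})=T_{y}(V(x_{0}))$. Because $V$ is continuous with $V(0)=0$ and radially unbounded, its range over $\mathbb{R}^{n}$ is all of $[0,+\infty)$; passing to the supremum over $x_{0}\in\mathbb{R}^{n}$ therefore gives $\sup_{x_{0}\in\mathbb{R}^{n}}T(x_{0})=\sup_{y_{0}\geq 0}T_{y}(y_{0})=T_{c}$, which is the desired strong predefined time.

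The main technical obstacle I anticipate is the rigorous invocation of the comparison principle, since $V$ is only assumed continuous rather than $C^{1}$, so $\dot{V}$ must be interpreted as an upper Dini derivative along trajectories (or, equivalently, the inequality must be read in the Carath\'eodory sense after noting that $t\mapsto V(x(t,x_{0}))$ is locally absolutely continuous when $x(\cdot,x_{0})$ is). Once that technicality is handled, the rest of the argument is essentially a change of variables plus the scalar computation already encapsulated in Theorem~\ref{th:tf_poly}.
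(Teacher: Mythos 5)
Your proposal is correct and follows essentially the same route as the paper: a scalar comparison system of the form treated in Theorem~\ref{th:tf_poly} (the paper absorbs the factor $\gamma/T_c$ into rescaled coefficients $\hat{\alpha},\hat{\beta}$ so as to cite that theorem verbatim, whereas you keep the factor and recompute the settling-time integral, which is the same calculation), followed by the comparison lemma to get $V(x(t))\leq y(t)$ and hence $T(x_0)\leq T_c$. Your additional remarks --- using positive definiteness of $V$ to pass from $V(x(t))=0$ to $x(t)=0$, using radial unboundedness and continuity of $V$ to argue that the supremum over $x_0$ sweeps all of $[0,+\infty)$ in the strong predefined-time case, and flagging that $\dot V$ must be read as a Dini derivative since $V$ is only assumed continuous --- are details the paper glosses over, and they strengthen rather than alter the argument.
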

\begin{proof} The negative definiteness of the time derivative of the function $V$ implies Lyapunov stability of the origin of~\eqref{Eq:NonSystemDyn}. Now, suppose that there exists a function $w(t)\geq 0$ that satisfies
\begin{equation*}
    \dot{w} = -(\hat{\alpha} w^p + \hat{\beta} w^q)^k,
\end{equation*}
where $\hat{\alpha} = \alpha\left(\frac{\gamma(\rho)}{T_c}\right)^\frac{1}{k}$ and $\hat{\beta} = \beta\left(\frac{\gamma(\rho)}{T_c}\right)^\frac{1}{k}$, and $V(x_0)\leq w(0)$. Hence, by Theorem \ref{th:tf_poly}, $w(t)$ will converge to the origin in a strong predefined time 
\begin{equation*}
\frac{\Gamma \left(m_p\right) \Gamma \left(m_q\right)}{\hat{\alpha}^{k}\Gamma (k) (q-p)}\left(\frac{\hat{\alpha}}{\hat{\beta}}\right)^{m_p}=\frac{T_c}{\gamma(\rho)}\frac{  \Gamma \left(m_p\right) \Gamma \left(m_q\right)}{\alpha^{k}\Gamma (k) (q-p)}\left(\frac{\alpha}{\beta}\right)^{m_p} = T_c
\end{equation*}
which is directly a tunable parameter of the system. Furthermore, by the comparison lemma~\cite[Lemma~3.4]{Khalil2002}, it follows that $V(x(t))\leq w(t)$, with equality only if \eqref{eq:dV_weak} is an equality. Consequently, the origin of system~\eqref{Eq:NonSystemDyn} is predefined-time stable with predefined time $T_c$. Moreover, if \eqref{eq:dV_weak} is an equality, then $\sup_{x_0\in\mathbb{R}^n}T(x_0)=T_c$, i.e., $T_c$ is the strong predefined time. 
\end{proof}

\begin{example} Consider system~\eqref{Eq:HomFixed} and the continuous positive definite radially unbounded Lyapunov candidate function $V(x)=\abs{x}$ for this system. The derivative of $V(x)$ along the trajectories of system~\eqref{Eq:HomFixed} is \[\dot{V}(x)=-\sign{x}\frac{\gamma}{T_c}\left(\alpha\abs{x}^p+\beta\abs{x}^q\right)^k\sign{x}=-\frac{\gamma}{T_c}\left(\alpha V(x)^p+\beta V(x)^q\right)^k.\] Hence, by Theorem~\ref{thm:weak_pt}, the origin of system~\eqref{Eq:HomFixed} is predefined-time stable with strong predefined time $T_c$.

To illustrate the above, some numerical simulations of system~\eqref{Eq:HomFixed} are conducted, setting the parameters to $\alpha=4$, $\beta=\frac{1}{4}$, $T_c=1$, $p=0.5$, $q=3$, $k=1.5$. The simulations are conducted for several initial conditions $x_0$, as presented in Figure~\ref{Fig:Simu}. It can be seen that $\sup_{x_0\in\mathbb{R}}T(x_0)=T_c=1$ as stated above.
\begin{figure}
    \centering
    \def\svgwidth{13cm}
\begingroup%
  \makeatletter%
  \providecommand\color[2][]{%
    \errmessage{(Inkscape) Color is used for the text in Inkscape, but the package 'color.sty' is not loaded}%
    \renewcommand\color[2][]{}%
  }%
  \providecommand\transparent[1]{%
    \errmessage{(Inkscape) Transparency is used (non-zero) for the text in Inkscape, but the package 'transparent.sty' is not loaded}%
    \renewcommand\transparent[1]{}%
  }%
  \providecommand\rotatebox[2]{#2}%
  \newcommand*\fsize{\dimexpr\f@size pt\relax}%
  \newcommand*\lineheight[1]{\fontsize{\fsize}{#1\fsize}\selectfont}%
  \ifx\svgwidth\undefined%
    \setlength{\unitlength}{370.68019867bp}%
    \ifx\svgscale\undefined%
      \relax%
    \else%
      \setlength{\unitlength}{\unitlength * \real{\svgscale}}%
    \fi%
  \else%
    \setlength{\unitlength}{\svgwidth}%
  \fi%
  \global\let\svgwidth\undefined%
  \global\let\svgscale\undefined%
  \makeatother%
  \begin{picture}(1,0.80045019)%
    \lineheight{1}%
    \setlength\tabcolsep{0pt}%
    \put(0,0){\includegraphics[width=\unitlength,page=1]{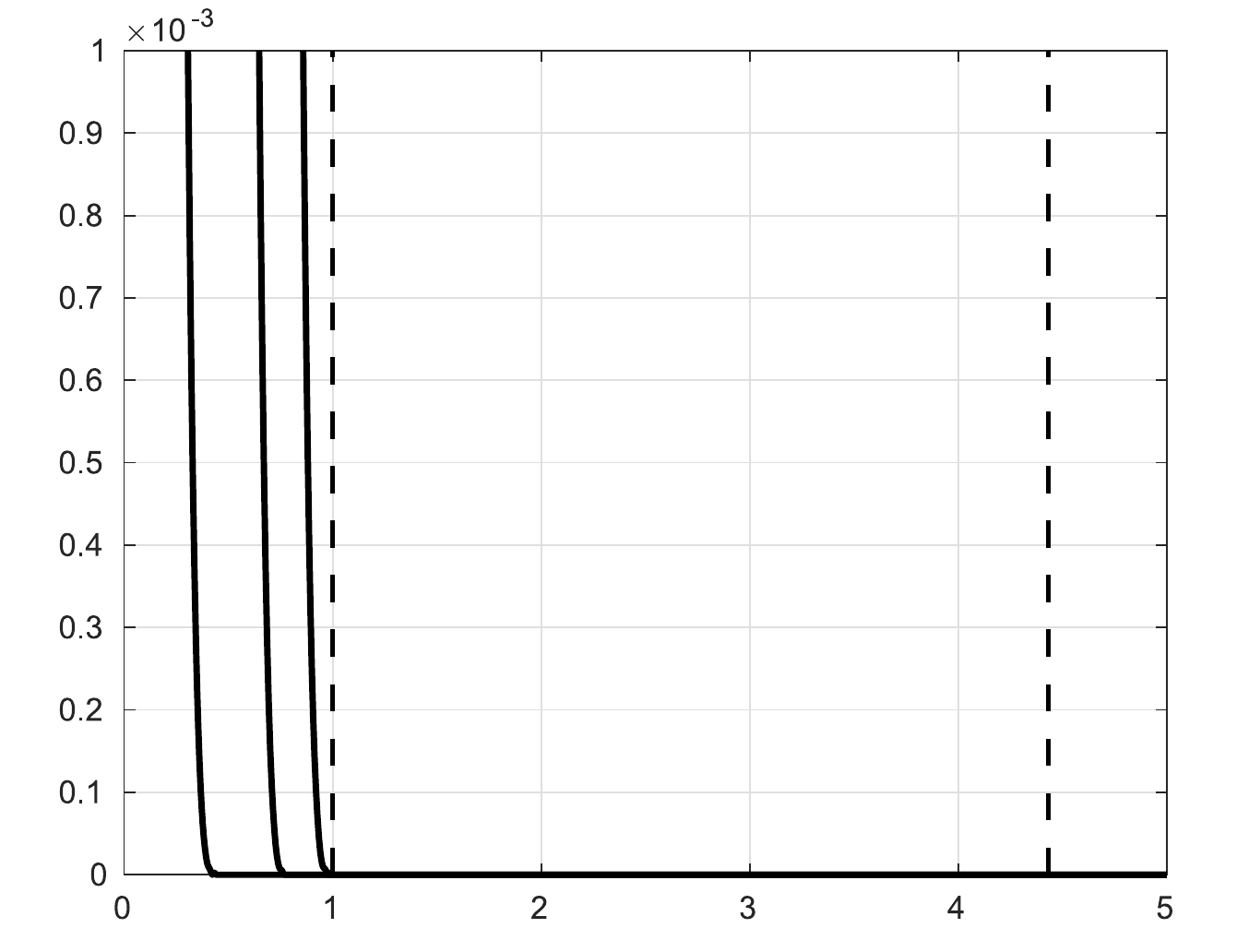}}%
    \put(0.45,0.00434171){\color[rgb]{0,0,0}\makebox(0,0)[lt]{\lineheight{1.25}\smash{\begin{tabular}[t]{l}time $(t)$\end{tabular}}}}%
    \put(0.01660798,0.36178063){\color[rgb]{0,0,0}\rotatebox{90}{\makebox(0,0)[lt]{\lineheight{1.25}\smash{\begin{tabular}[t]{l}$x(t,x_0)$\end{tabular}}}}}%
    \put(0.31,0.36178063){\color[rgb]{0,0,0}\rotatebox{90}{\makebox(0,0)[lt]{\lineheight{1.25}\smash{\begin{tabular}[t]{l}$T_c=1$\end{tabular}}}}}%
    \put(0.9,0.36178063){\color[rgb]{0,0,0}\rotatebox{90}{\makebox(0,0)[lt]{\lineheight{1.25}\smash{\begin{tabular}[t]{l}$T_{\text{max}}(4)=4.4331$\end{tabular}}}}}%
    \put(0.14,0.51){\color[rgb]{0,0,0}\rotatebox{90}{\makebox(0,0)[lt]{\lineheight{1.25}\smash{\begin{tabular}[t]{l}$x_0=0.1$\end{tabular}}}}}%
    \put(0.24,0.51){\color[rgb]{0,0,0}\rotatebox{90}{\makebox(0,0)[lt]{\lineheight{1.25}\smash{\begin{tabular}[t]{l}$x_0=1e{20}$\end{tabular}}}}}%
    \put(0.19,0.51){\color[rgb]{0,0,0}\rotatebox{90}{\makebox(0,0)[lt]{\lineheight{1.25}\smash{\begin{tabular}[t]{l}$x_0=1$\end{tabular}}}}}%
  \end{picture}%
\endgroup%
    \caption{Trajectories of~\eqref{Eq:HomFixed} for different initial conditions with $\alpha=4$, $\beta=\frac{1}{4}$, $T_c=1$, $p=0.5$, $q=3$, $k=1.5$. The upper estimate in~\eqref{Eq:UpperEstimateRho} is $T_{\text{max}}(4)=4.4331s$. The least upper estimate is $T_c=1s$.}
    \label{Fig:Simu}
\end{figure}


\end{example}

\begin{remark} In~\cite{Parsegov2012}, the least upper estimation of the settling time of~\eqref{Eq:HomFixedPoly} was addressed for the case where $k=1$, $p=1-s$, $q=1+s$, with $0<s<1$, where $\gamma(\rho)$ reduces to $\gamma(\rho)=\frac{\Gamma(\frac{1}{2})^2}{2s\sqrt{\alpha\beta}}=\frac{\pi}{2s\sqrt{\alpha\beta}}$. In~\cite{Jimenez-Rodriguez2018a}, it was shown that, in the case $\alpha=\beta=\frac{\pi}{2sT_c}$, the least upper bound of the settling time is $T_c$. Thus, Theorem~\ref{thm:weak_pt} is a generalization of the results presented in~\cite{Parsegov2012} and~\cite{Jimenez-Rodriguez2018a}. Since only for the case where $k=1$, $p=1-s$ and $q=1+s$, with $0<s<1$, a non-conservative upper bound estimate is provided, many applications, for instance fixed-time consensus protocols~\cite{Zuo2014,Defoort2015,Ning2017b}, have been restricted to this case. 
\end{remark}

Given the relevance of fixed-time stability argumented in the introduction, inequality \eqref{eq:dV_weak} is a result of paramount importance, since as we show in the following, the upper estimate~\eqref{Eq:UpperEstimate} is often too conservative. Thus, applications based on the upper estimate of the settling time~\eqref{Eq:UpperEstimate} presented in~\cite[Lemma~1]{Polyakov2012} are often over-engineered.

\subsection{Settling time bound analysis and comparison}
\label{Sec:TmaxAnalysis}
Consider the predefined-time stable system~\eqref{Eq:HomFixed}, with strong predefined-time $T_c$. From~\eqref{Eq:UpperEstimate}, calculated in~\cite{Polyakov2012}, an upper bound for the settling time $T(x_0)$ is
\begin{equation}
\label{Eq:UpperEstimateModified}  
T(x_0)\leq \frac{T_c}{\gamma(\rho)}\left(\frac{1}{\alpha^k(1-pk)}+\frac{1}{\beta^k(qk-1)}\right),\ \ \ \forall x_0\in\mathbb{R}. 
\end{equation}

Let $\varrho>0$, $\alpha=\varrho$ and $\beta=\frac{1}{\varrho}$. Assuming that $p$, $q$ and $k$ remain constant, and noticing that $\gamma$ is a function of $\varrho$, it can be seen that varying $\varrho$ the least upper bound of the settling time remains constant and equal to $T_c$. However, the bound~\eqref{Eq:UpperEstimateModified} becomes
\begin{equation}
\label{Eq:UpperEstimateRho}  
T(x_0)\leq T_{\text{max}}(\varrho):=\frac{T_c}{K} \left(\frac{1}{\varrho^{2m_p}(1-pk)}+\frac{\varrho^{2(k-2m_p)}}{(qk-1)}\right),
\end{equation}
where $K=\frac{  \Gamma \left(m_p\right) \Gamma \left(m_q\right) }{\Gamma (k) (q-p)}$.
It is easy to see that 
$$
\lim_{\varrho\to 0}T_{\text{max}}(\varrho)=\lim_{\varrho\to\infty}T_{\text{max}}(\varrho)=+\infty,
$$
i.e., $T_{\text{max}}(\varrho)$ in~\eqref{Eq:UpperEstimateRho} has no upper bound as $\varrho$ increases or is close to zero.

Moreover, the best upper estimate of the bound~\eqref{Eq:UpperEstimateRho} is achieved at $\arg\min_{\varrho>0}T_{\text{max}}(\varrho)=1$, and its value is
\begin{equation*}
\label{Eq:UpperEstimateRhoMin}  
\min_{\varrho>0}T_{\text{max}}(\varrho)=T_{\text{max}}(1)=\frac{T_c}{K} \left(\frac{1}{(1-pk)}+\frac{1}{(qk-1)}\right)>T_c.
\end{equation*}

An illustration of this argument, showing $T_{\text{\text{max}}}(\varrho)$ as a function of $\varrho$, with $T_c=1s$, is presented in Figure~\ref{Fig:Convergence}. Although, by Theorem~\ref{thm:weak_pt} the least upper bound of the settling time is $T_c=1s$, it can be seen that in the best case the bound~\eqref{Eq:UpperEstimateModified} provides an overestimation of $\varepsilon T_c$ with $\varepsilon=\left.\frac{1}{K} \left(\frac{1}{(1-pk)}+\frac{1}{(qk-1)}\right)\right|_{p=0.5,q=3,k=1.5}=1.1249$. Moreover, in Figure~\ref{Fig:Simu}, it can be seen that although the least upper bound of the settling-time function is $T_c=1s$, the upper bound estimation provided by~\eqref{Eq:UpperEstimateRho} is $\left.T_{\text{max}}(\varrho)\right|_{\varrho=4}=4.4331s$.
\begin{figure}
    \centering
    \def\svgwidth{13cm}
  \begingroup%
  \makeatletter%
  \providecommand\color[2][]{%
    \errmessage{(Inkscape) Color is used for the text in Inkscape, but the package 'color.sty' is not loaded}%
    \renewcommand\color[2][]{}%
  }%
  \providecommand\transparent[1]{%
    \errmessage{(Inkscape) Transparency is used (non-zero) for the text in Inkscape, but the package 'transparent.sty' is not loaded}%
    \renewcommand\transparent[1]{}%
  }%
  \providecommand\rotatebox[2]{#2}%
  \newcommand*\fsize{\dimexpr\f@size pt\relax}%
  \newcommand*\lineheight[1]{\fontsize{\fsize}{#1\fsize}\selectfont}%
  \ifx\svgwidth\undefined%
    \setlength{\unitlength}{365.05215382bp}%
    \ifx\svgscale\undefined%
      \relax%
    \else%
      \setlength{\unitlength}{\unitlength * \real{\svgscale}}%
    \fi%
  \else%
    \setlength{\unitlength}{\svgwidth}%
  \fi%
  \global\let\svgwidth\undefined%
  \global\let\svgscale\undefined%
  \makeatother%
  \begin{picture}(1,0.78993222)%
    \lineheight{1}%
    \setlength\tabcolsep{0pt}%
    \put(0,0){\includegraphics[width=\unitlength,page=1]{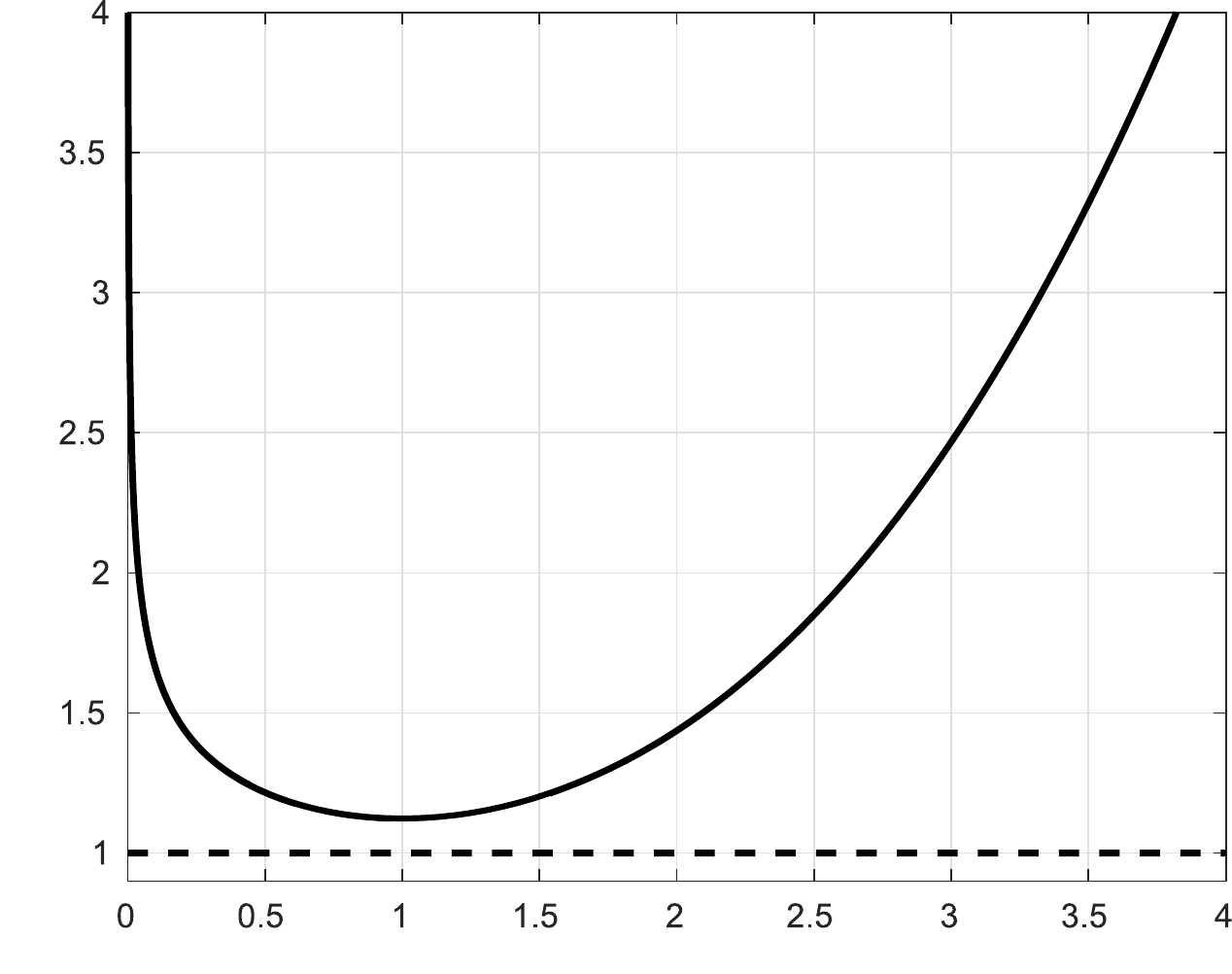}}%
    \put(0.02996607,0.38){\color[rgb]{0.14901961,0.14901961,0.14901961}\rotatebox{90}{\makebox(0,0)[lt]{\lineheight{1.25}\smash{\begin{tabular}[t]{l}$T_{\text{max}}(\varrho)$\end{tabular}}}}}%
    \put(0.52,0.0038629){\color[rgb]{0,0,0}\makebox(0,0)[lt]{\lineheight{1.25}\smash{\begin{tabular}[t]{l}$\varrho$\end{tabular}}}}%
    \put(0.8,0.11){\color[rgb]{0,0,0}\makebox(0,0)[lt]{\lineheight{1.25}\smash{\begin{tabular}[t]{l}$T_c$\end{tabular}}}}%
  \end{picture}%
\endgroup%
    \caption{Function $T_{\text{max}}(\varrho)$ for $p=0.5$, $q=3$, $k=1.5$. The least upper bound of the settling time is $T_c=1s$. The smallest value of $T_{\text{max}}(\varrho)$ is $\min_{\varrho>0}T_{\text{max}}(\varrho)=T_{\text{max}}(1)=1.1249s$.}
    \label{Fig:Convergence}
\end{figure}


\section{Application to robust predefined-time stabilization for first and second order systems}\label{Sec:stabilization}

\subsection{First-order predefined-time controllers}
Consider the following perturbed first-order system
\begin{equation}\label{eq:fosys}
\dot{x}(t)=u(t)+\Delta(t),
\end{equation}
where $x\in\mathbb{R}$ is the state variable, $u\in\mathbb{R}$ is the control input and $\Delta(t)\in\mathbb{R}$ is an unknown but bounded perturbation term of the form $\abs{\Delta(t)}\leq\delta$, with $0\leq\delta<\infty$ a known constant.


The objective is to to design the control input $u$ such that the origin $x=0$ of system~\eqref{eq:fosys} is predefined-time stable, in spite of the unknown perturbation term $\bm{\Delta}(t)$.

\begin{theorem} Let $\alpha,\beta,p,q,k>0$, $kp<1$, $kq>1$, $T_c>0$, $\zeta\geq\delta$ and $\gamma$ be as in~\eqref{Eq:MinUpperEstimate}. If the control input $u$ is selected as
\begin{equation}\label{eq:ufo}
u=-\left[\frac{\gamma}{T_c}\left(\alpha\abs{x}^p+\beta\abs{x}^q\right)^k+\zeta\right]\sign{x}
\end{equation}
then, the origin $x=0$ of system~\eqref{eq:fosys} is predefined-time stable with predefined time $T_c$.
\end{theorem}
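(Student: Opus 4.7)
The plan is to verify the hypotheses of Theorem~\ref{thm:weak_pt} with the Lyapunov candidate $V(x)=\abs{x}$. This function is continuous, positive definite, and radially unbounded on $\mathbb{R}$, so the only substantive task is to bound $\dot V$ along trajectories of the closed-loop system \eqref{eq:fosys}--\eqref{eq:ufo} by the right-hand side of~\eqref{eq:dV_weak}.

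First I would compute $\dot V$ formally (taking the Dini derivative at $x=0$ if needed, though for the convergence argument only $x\neq 0$ matters): for $x\neq 0$,
\begin{equation*}
\dot V(x) = \sign{x}\,\dot x = \sign{x}\bigl(u(t)+\Delta(t)\bigr).
\end{equation*}
Substituting the control law~\eqref{eq:ufo} and using $\sign{x}\cdot\sign{x}=1$, the discontinuous term collapses to a scalar and yields
\begin{equation*}
\dot V(x) = -\frac{\gamma}{T_c}\bigl(\alpha\abs{x}^{p}+\beta\abs{x}^{q}\bigr)^{k} - \zeta + \sign{x}\,\Delta(t).
\end{equation*}

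Next, I would dominate the perturbation term by $\sign{x}\,\Delta(t)\leq\abs{\Delta(t)}\leq\delta$, and use the standing assumption $\zeta\geq\delta$ to cancel it:
\begin{equation*}
\dot V(x) \leq -\frac{\gamma}{T_c}\bigl(\alpha\abs{x}^{p}+\beta\abs{x}^{q}\bigr)^{k} - (\zeta-\delta) \leq -\frac{\gamma}{T_c}\bigl(\alpha V(x)^{p}+\beta V(x)^{q}\bigr)^{k},
\end{equation*}
where the last step uses $V(x)=\abs{x}$. This is precisely inequality~\eqref{eq:dV_weak} with the given $\alpha,\beta,p,q,k$ satisfying $kp<1$ and $kq>1$. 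By Theorem~\ref{thm:weak_pt}, the origin of \eqref{eq:fosys} in closed loop with~\eqref{eq:ufo} is predefined-time stable with predefined time $T_c$, which is the conclusion.

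The only mildly delicate point is that the closed-loop vector field is discontinuous at $x=0$ because of $\sign{x}$, so strictly speaking solutions must be understood in the Filippov sense, and $V=\abs{x}$ is only locally Lipschitz. I would briefly note that the above inequality is valid for all $x\neq 0$ and that the sliding behavior at $x=0$ (together with $\zeta\geq\delta$) keeps the trajectory at the origin thereafter, so the Lyapunov comparison argument in the proof of Theorem~\ref{thm:weak_pt} still applies. No further calculation beyond what is displayed above is needed.
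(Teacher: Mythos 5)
Your proof is correct and follows essentially the same route as the paper: take $V(x)=\abs{x}$, compute $\dot V=\sign{x}(u+\Delta)$, bound the perturbation by $\delta$ using $\zeta\geq\delta$, and invoke Theorem~\ref{thm:weak_pt}. Your added remark on the Filippov interpretation at $x=0$ is a point the paper leaves implicit, but otherwise the arguments coincide.
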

\begin{proof} Consider the continuous radially unbounded Lyapunov function candidate $V(x)=\abs{x}$. Its derivative along the trajectories of the closed system~\eqref{eq:fosys}-\eqref{eq:ufo} yields
\begin{align*}
\dot{V}(x)&=-\sign{x}\left[\frac{\gamma}{T_c}\left(\alpha\abs{x}^p+\beta\abs{x}^q\right)^k\sign{x}+\zeta\sign{x}-\Delta\right]\\&=-\frac{\gamma}{T_c}\left(\alpha V(x)^p+\beta V(x)^q\right)^k-\zeta+\Delta\sign{x}\\
&\leq-\frac{\gamma}{T_c}\left(\alpha V(x)^p+\beta V(x)^q\right)^k-\zeta+\abs{\Delta\sign{x}}\\
&\leq-\frac{\gamma}{T_c}\left(\alpha V(x)^p+\beta V(x)^q\right)^k-(\zeta-\delta)\\
&\leq-\frac{\gamma}{T_c}\left(\alpha V(x)^p+\beta V(x)^q\right)^k.
\end{align*}
Hence, using Theorem~\ref{thm:weak_pt}, the origin $x=0$ of system~\eqref{eq:fosys} is predefined-time stable with predefined time $T_c$. 
\end{proof}
\begin{example}
An example of this approach is shown in Figure \ref{Fig:ScalarControl}, where the control input $u$ given in~\eqref{eq:ufo}, with $\zeta=1$, $p=0.5$, $q=3$, $k=1.5$ and $\alpha=1/\beta=\varrho=4$, is applied to the perturbed system~\eqref{eq:fosys} with disturbance $\Delta(t) = \sin(2\pi t /5)$. Note that although the upper bound of the settling-time function is $T_c=1s$ using the proposed scheme, the upper bound estimation provided by~\cite{Polyakov2012} is $T_{\text{max}}(\varrho)|_{\varrho=4} = 4.4331s$.

\begin{figure}
\centering
\def\svgwidth{13cm}
\begingroup%
  \makeatletter%
  \providecommand\color[2][]{%
    \errmessage{(Inkscape) Color is used for the text in Inkscape, but the package 'color.sty' is not loaded}%
    \renewcommand\color[2][]{}%
  }%
  \providecommand\transparent[1]{%
    \errmessage{(Inkscape) Transparency is used (non-zero) for the text in Inkscape, but the package 'transparent.sty' is not loaded}%
    \renewcommand\transparent[1]{}%
  }%
  \providecommand\rotatebox[2]{#2}%
  \newcommand*\fsize{\dimexpr\f@size pt\relax}%
  \newcommand*\lineheight[1]{\fontsize{\fsize}{#1\fsize}\selectfont}%
  \ifx\svgwidth\undefined%
    \setlength{\unitlength}{1125.88330078bp}%
    \ifx\svgscale\undefined%
      \relax%
    \else%
      \setlength{\unitlength}{\unitlength * \real{\svgscale}}%
    \fi%
  \else%
    \setlength{\unitlength}{\svgwidth}%
  \fi%
  \global\let\svgwidth\undefined%
  \global\let\svgscale\undefined%
  \makeatother%
  \begin{picture}(1,0.62707645)%
    \lineheight{1}%
    \setlength\tabcolsep{0pt}%
    \put(0,0){\includegraphics[width=\unitlength]{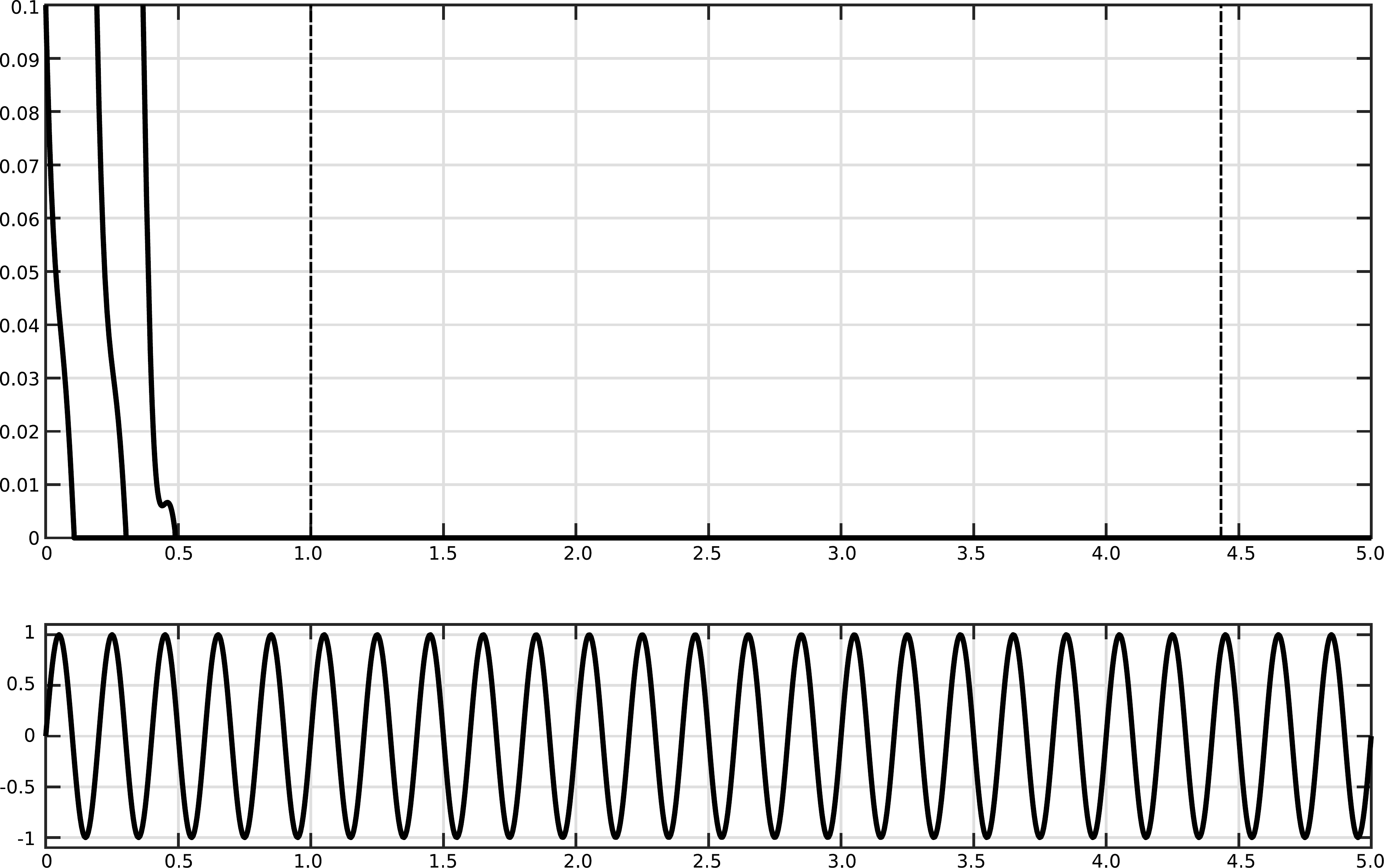}}%
    \scriptsize{
    \put(0.049,0.42163523){\color[rgb]{0,0,0}\rotatebox{-82.70268644}{\makebox(0,0)[lt]{\lineheight{1.25}\smash{\begin{tabular}[t]{l}$x_0=0.1$\end{tabular}}}}}%
    \put(0.49168561,-0.02){\color[rgb]{0,0,0}\makebox(0,0)[lt]{\lineheight{1.25}\smash{\begin{tabular}[t]{l}time $(t)$\end{tabular}}}}%
    \put(0.08,0.5){\color[rgb]{0,0,0}\rotatebox{-84.71961}{\makebox(0,0)[lt]{\lineheight{1.25}\smash{\begin{tabular}[t]{l}$x_0=1$\end{tabular}}}}}%
    \put(0.12,0.6){\color[rgb]{0,0,0}\rotatebox{-89.200675}{\makebox(0,0)[lt]{\lineheight{1.25}\smash{\begin{tabular}[t]{l}$x_0=1e20$\end{tabular}}}}}%
    \put(0.24,0.44934085){\color[rgb]{0,0,0}\rotatebox{-90}{\makebox(0,0)[lt]{\lineheight{1.25}\smash{\begin{tabular}[t]{l}$T_c=1$\end{tabular}}}}}%
    \put(0.9,0.46274433){\color[rgb]{0,0,0}\rotatebox{-90}{\makebox(0,0)[lt]{\lineheight{1.25}\smash{\begin{tabular}[t]{l}$T_{max}=4.4331$\end{tabular}}}}}%
    \put(-0.01,0.4){\color[rgb]{0,0,0}\rotatebox{90}{\makebox(0,0)[lt]{\lineheight{1.25}\smash{\begin{tabular}[t]{l}$x(t,x_0)$\end{tabular}}}}}%
    \put(-0.01,0.05){\color[rgb]{0,0,0}\rotatebox{90}{\makebox(0,0)[lt]{\lineheight{1.25}\smash{\begin{tabular}[t]{l}$\Delta(t)$\end{tabular}}}}}%
    }
  \end{picture}%
\endgroup%
\caption{State trajectory of the closed-loop system~\eqref{eq:fosys}-\eqref{eq:ufo} with $T_c=1s$ and the considered perturbation.}
\label{Fig:ScalarControl}
\end{figure}

\end{example}

\subsection{Second-order predefined-time controllers}
Consider the following perturbed second-order system
\begin{align}\label{eq:sosys}
\begin{split}
\dot{x}_1&=x_2\\
\dot{x}_2&=u+\Delta(t),    
\end{split}    
\end{align}
where $x_1,x_2\in\mathbb{R}$ are the state variables, $u\in\mathbb{R}$ is the control input and $\Delta(t)\in\mathbb{R}$ is an unknown but bounded perturbation term of the form $\abs{\Delta(t)}\leq\delta$, with $0\leq\delta<\infty$ a known constant.


The objective is to design the control input $u$ such that the origin $(x_1,x_2)=(0,0)$ of system~\eqref{eq:sosys} is predefined-time stable, in spite of the unknown perturbation term $\bm{\Delta}(t)$.

The following definition will be useful for stating Theorem~\ref{thm:socont}.

\begin{definition}\label{def:oddpower} For any real number $r$, the function $x\to \barpow{x}^r$ is defined as $\barpow{x}=\abs{x}^r \sign{x}$ for any $x\in \mathbb{R}$ if $r>0$, and for any $x\in \mathbb{R}\setminus\{0\}$ if $r\leq0$. 
\end{definition}

\begin{theorem}\label{thm:socont} Let $\alpha_1,\alpha_2,\beta_1,\beta_2,p,q,k>0$, $kp<1$, $kq>1$, $T_{c_1},T_{c_2}>0$, $\zeta\geq\delta$, and \[\gamma_1=\frac{\Gamma \left(\frac{1}{4}\right)^2 }{2\alpha_1^{1/2}\Gamma\left(\frac{1}{2}\right)}\left(\frac{\alpha_1}{\beta_1}\right)^{1/4},\text{ and } \gamma_2=\frac{\Gamma \left(m_{p}\right) \Gamma \left(m_{q}\right)}{\alpha_2^{k}\Gamma (k) (q-p)}\left(\frac{\alpha_2}{\beta_2}\right)^{m_{p}},\] with $m_{p}=\frac{1-kp}{q-p}$ and $m_{q}=\frac{kq-1}{q-p}$. If the control input is selected as
\begin{equation}\label{eq:uso}
u=-\left[\frac{\gamma_2}{T_{c_2}}\left(\alpha_2\abs{\sigma}^{p}+\beta_2\abs{\sigma}^{q}\right)^{k}+\frac{\gamma_1^2}{2T_{c_1}^2}\left(\alpha_1+3\beta_1x_1^2\right)+\zeta\right]\sign{\sigma},
\end{equation}
where the sliding variable $\sigma$ is defined as
\begin{equation}\label{eq:sigmaso}
\sigma=x_2+\barpow{\barpow{x_2}^2+\frac{\gamma_1^2}{T_{c_1}^2}\left(\alpha_1\barpow{x_1}^1+\beta_1\barpow{x_1}^3\right)}^{1/2},
\end{equation}
then the origin $(x_1,x_2)=(0,0)$ of system~\eqref{eq:sosys} is predefined-time stable with predefined time $T_c=T_{c_1}+T_{c_2}$.
\end{theorem}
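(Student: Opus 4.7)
The plan is a two-stage sliding-mode argument: a reaching phase driving the variable $\sigma$ in~\eqref{eq:sigmaso} to zero in predefined time $T_{c_2}$, followed by a sliding phase in which the reduced-order motion on $\{\sigma=0\}$ brings $x_1$ (and hence $x_2$) to the origin in predefined time $T_{c_1}$. Both phases will be reduced to applications of Theorem~\ref{thm:weak_pt}.

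\textbf{Reaching phase.} Let $h(x_1,x_2) = \barpow{x_2}^{2}+\frac{\gamma_1^{2}}{T_{c_1}^{2}}\bigl(\alpha_1\barpow{x_1}^{1}+\beta_1\barpow{x_1}^{3}\bigr)$, so that $\sigma=x_2+\barpow{h}^{1/2}$, and take the Lyapunov candidate $V_\sigma=\abs{\sigma}$. Using the identity $\frac{d}{dz}\barpow{z}^{r}=r\abs{z}^{r-1}$ and the chain rule gives
\[
\dot\sigma \;=\; (u+\Delta)\left(1+\frac{\abs{x_2}}{\abs{h}^{1/2}}\right) + \frac{\gamma_1^{2}\,(\alpha_1+3\beta_1 x_1^{2})\,x_2}{2\,T_{c_1}^{2}\,\abs{h}^{1/2}}.
\]
Substituting~\eqref{eq:uso} and multiplying by $\sign(\sigma)$, the compensating term $\frac{\gamma_1^{2}}{2T_{c_1}^{2}}(\alpha_1+3\beta_1 x_1^{2})$ inside $u$ is designed so that, after using the elementary bound $x_2\sign(\sigma)-\abs{x_2}\leq 0$, the nonlinear drift arising from differentiating the sliding surface is exactly cancelled. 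Absorbing the matched perturbation via $\zeta\geq\delta$ then yields
\[
\dot V_\sigma \;\leq\; -\frac{\gamma_2}{T_{c_2}}\bigl(\alpha_2 V_\sigma^{p}+\beta_2 V_\sigma^{q}\bigr)^{k},
\]
so Theorem~\ref{thm:weak_pt} gives $\sigma(t)\equiv 0$ for every $t\geq T_{c_2}$.

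\textbf{Sliding phase.} On the surface $\sigma=0$, the identity $x_2=-\barpow{h}^{1/2}$ holds. Squaring and substituting the definition of $h$ eliminates the $\barpow{x_2}^{2}$ contribution from both sides and produces $x_2$ as an explicit algebraic function of $x_1$. The resulting reduced-order scalar equation for $x_1$ has the structure of~\eqref{Eq:HomFixed} with $p=1$, $q=3$, $k=\tfrac{1}{2}$, $\alpha=\alpha_1$, $\beta=\beta_1$; evaluating~\eqref{Eq:MinUpperEstimate} at these exponents returns precisely the constant $\gamma_1$ given in the theorem statement, so Theorem~\ref{thm:weak_pt} (equivalently Theorem~\ref{th:tf_poly}) yields $x_1(t)\to 0$ in predefined time $T_{c_1}$, while $x_2$ vanishes simultaneously because it is an algebraic function of $x_1$ that vanishes at the origin. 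Adding the two phase durations gives $T(x_0)\leq T_{c_1}+T_{c_2}=T_c$.

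\textbf{Main obstacle.} The chief technical subtlety is the apparent singularity $\abs{h}^{-1/2}$ appearing in $\dot\sigma$: at points where $h$ vanishes the chain-rule expression above is not directly defined. I would deal with this through a Filippov-type/limiting argument, observing that $h=0$ forces $\sigma=x_2$ (so the singularity does not occur on the sliding manifold itself except at the origin) and that the potentially unbounded summands multiply $\abs{x_2}$, which vanishes at the same rate; this permits interpreting the Lyapunov inequality in an almost-everywhere sense. A secondary but unavoidable check is the algebraic cancellation of the drift term in the reaching phase, which requires carefully tracking the sign-dependent contribution $\frac{B}{\abs{h}^{1/2}}\bigl(x_2\sign(\sigma)-\abs{x_2}\bigr)\leq 0$ with $B=\frac{\gamma_1^{2}}{2T_{c_1}^{2}}(\alpha_1+3\beta_1 x_1^{2})$ and discarding it before invoking Theorem~\ref{thm:weak_pt}.
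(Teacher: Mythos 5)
Your proposal follows essentially the same two-phase sliding-mode argument as the paper: first the reaching phase, computing $\dot\sigma$ by the chain rule and using $V_2(\sigma)=\abs{\sigma}$ together with Theorem~\ref{thm:weak_pt} to force $\sigma=0$ after $T_{c_2}$, then the sliding phase, identifying the reduced-order dynamics of $x_1$ on $\{\sigma=0\}$ as an instance of~\eqref{Eq:HomFixed} with $p=1$, $q=3$, $k=\tfrac12$ whose constant is exactly $\gamma_1$, and concluding with $V_1(x_1)=\abs{x_1}$ and Theorem~\ref{thm:weak_pt} that $x_1$ (hence $x_2$) vanishes after an additional $T_{c_1}$. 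Your explicit treatment of the $\abs{h}^{-1/2}$ singularity and of the sign-dependent term $x_2\sign{\sigma}-\abs{x_2}\leq 0$ is, if anything, more careful than the paper's own exposition, so the proposal is consistent with the published proof.
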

\begin{proof} The time-derivative of the sliding variable $\sigma$ in~\eqref{eq:sigmaso} is
\begin{align*}
    \begin{split}
        \dot{\sigma}&=u+\Delta+\frac{\abs{x_2}(u+\Delta)+\frac{\gamma_1^2}{2T_{c_1}^2}\left(\alpha_1+3\beta_1x_1^2\right)x_2}{\abs{\barpow{x_2}^2+\frac{\gamma_1^2}{T_{c_1}^2}\left(\alpha_1\barpow{x_1}^1+\beta_1\barpow{x_1}^3\right)}^{1/2}}\\
        &=-\frac{\gamma_2}{T_{c_2}}\left(\alpha_2\abs{\sigma}^{p}+\beta_2\abs{\sigma}^{q}\right)^{k}\sign{\sigma}-\zeta\sign{\sigma}+\Delta-\frac{\gamma_1^2}{2T_{c_1}^2}\left(\alpha_1+3\beta_1x_1^2\right)\sign{\sigma}\\
        &-\frac{\frac{\gamma_2}{T_{c_2}}\left(\alpha_2\abs{\sigma}^{p}+\beta_2\abs{\sigma}^{q}\right)^{k}\sign{\sigma}+\zeta\sign{\sigma}-\Delta}{\abs{\barpow{x_2}^2+\frac{\gamma_1^2}{T_{c_1}^2}\left(\alpha_1\barpow{x_1}^1+\beta_1\barpow{x_1}^3\right)}^{1/2}}-\frac{\frac{\gamma_1^2}{2T_{c_1}^2}\left(\alpha_1+3\beta_1x_1^2\right)}{\abs{\barpow{x_2}^2+\frac{\gamma_1^2}{T_{c_1}^2}\left(\alpha_1\barpow{x_1}^1+\beta_1\barpow{x_1}^3\right)}^{1/2}}\left(\abs{x_2}\sign{\sigma}-x_2\right).
    \end{split}
\end{align*}

Now, considering $V_2(\sigma)=\abs{\sigma}$ as a continuous radially unbounded positive definite Lyapunov candidate function it can be easily checked using the above that \[\dot{V}(\sigma)\leq-\frac{\gamma_2}{T_{c_2}}\left(\alpha_2 V_2(\sigma)^{p}+\beta_2 V_2(\sigma)^{q}\right)^{k},\]
and using Theorem~\ref{thm:weak_pt}, the origin $\sigma=0$ of the sliding variable dynamics is predefined-time stable with predefined time $T_{c_2}$.

Once the system trajectories are constrained to the manifold $\sigma=0$, i.e. for $t\geq T_{c_2}$, the solutions of system~\eqref{eq:sosys} satisfy the following reduced-order dynamics (see~\eqref{eq:sigmaso}): \[\dot{x}_1=x_2=-\frac{\gamma_1}{T_{c_1}}\left(\alpha_1\abs{x_1}+\beta_1\abs{x_1}^3\right)^{1/2}\sign{x_1}.\]
Thus, considering $V_1(x_1)=\abs{x_1}$ as a continuous radially unbounded positive definite Lyapunov candidate function and using Theorem~\ref{thm:weak_pt}, it is concluded that the origin $x_1=0$ of the reduced order system is predefined-time stable with predefined time $T_{c_1}$. 
Moreover, from~\eqref{eq:sigmaso}, if $\sigma=0$ and $x_1=0$, then $x_2=0$. Hence, it is concluded that the origin $(x_1,x_2)=(0,0)$ of the closed-loop system~\eqref{eq:sosys}-\eqref{eq:uso} is predefined-time stable with predefined time $T_{c_1}+T_{c_2}$.
\end{proof}

\begin{example}
An example of this approach is shown in Figure~\ref{Fig:SecondOrderControl}, where the control input $u$ given in~\eqref{eq:uso}, with $\zeta=1$, $p=0.5$, $q=3$, $k=1.5$ and $\alpha_1=\alpha_2=1/\beta_1=1/\beta_2=4$ is applied to the perturbed system~\eqref{eq:sosys} with disturbance $\Delta(t) = \sin(2\pi t /5)$. It follows from Theorem~\ref{thm:socont} that with $T_{c_1} = T_{c_2} = 0.5s$, the origin $(x_1,x_2)=(0,0)$ of system~\eqref{eq:sosys} is predefined-time stable with predefined time $T_c=1s$. Note that although the upper bound of the settling-time function is $T_c=1s$ using the proposed scheme, the upper bound estimation provided by~\cite{Polyakov2012} is  $T_{\text{max}}=\frac{T_{c_2}}{\gamma_2}\left(\frac{1}{\alpha_{2}^k(1-pk)} + \frac{1}{\beta_{2}^k(qk-1)}\right) + \frac{2T_{c_1}}{\gamma_1}\left(\frac{1}{\sqrt{\alpha_{1}}} + \frac{1}{\sqrt{\beta_{1}}}\right)  =  5.1073s$.
\begin{figure}
\centering
\def\svgwidth{13cm}
\begingroup%
  \makeatletter%
  \providecommand\color[2][]{%
    \errmessage{(Inkscape) Color is used for the text in Inkscape, but the package 'color.sty' is not loaded}%
    \renewcommand\color[2][]{}%
  }%
  \providecommand\transparent[1]{%
    \errmessage{(Inkscape) Transparency is used (non-zero) for the text in Inkscape, but the package 'transparent.sty' is not loaded}%
    \renewcommand\transparent[1]{}%
  }%
  \providecommand\rotatebox[2]{#2}%
  \newcommand*\fsize{\dimexpr\f@size pt\relax}%
  \newcommand*\lineheight[1]{\fontsize{\fsize}{#1\fsize}\selectfont}%
  \ifx\svgwidth\undefined%
    \setlength{\unitlength}{1153.77978516bp}%
    \ifx\svgscale\undefined%
      \relax%
    \else%
      \setlength{\unitlength}{\unitlength * \real{\svgscale}}%
    \fi%
  \else%
    \setlength{\unitlength}{\svgwidth}%
  \fi%
  \global\let\svgwidth\undefined%
  \global\let\svgscale\undefined%
  \makeatother%
  \begin{picture}(1,0.6335442)%
    \lineheight{1}%
    \setlength\tabcolsep{0pt}%
    \footnotesize{
    \put(0,0){\includegraphics[width=\unitlength]{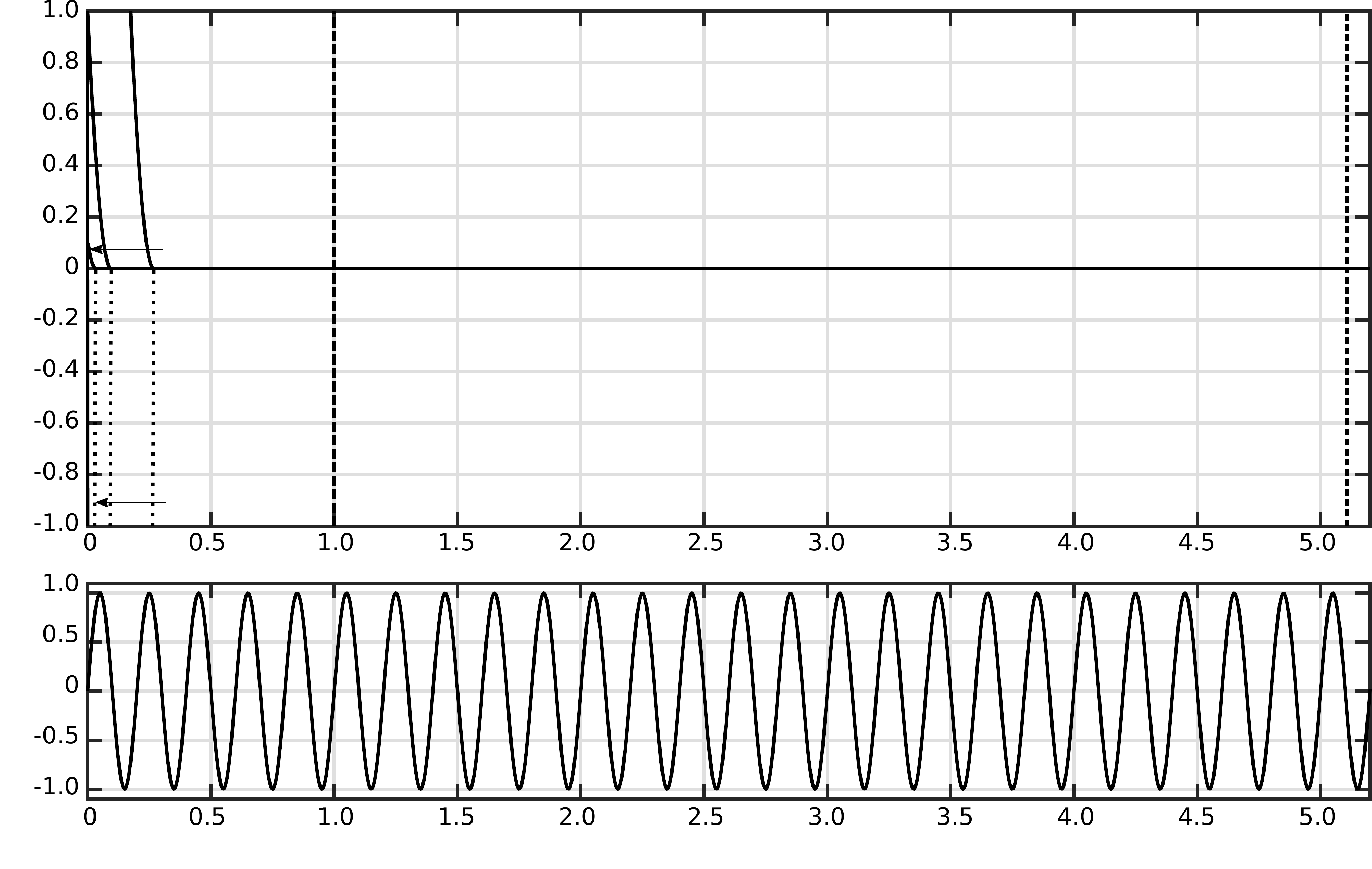}}%
    \put(0.48648,0.00191075){\color[rgb]{0,0,0}\makebox(0,0)[lt]{\lineheight{1.25}\smash{\begin{tabular}[t]{l}time ($t$)\end{tabular}}}}%
    \put(0.0214886,0.10780238){\color[rgb]{0,0,0}\rotatebox{90}{\makebox(0,0)[lt]{\lineheight{1.25}\smash{\begin{tabular}[t]{l}$\Delta(t)$\end{tabular}}}}}%
    \put(0.01130764,0.31267537){\color[rgb]{0,0,0}\rotatebox{90}{\makebox(0,0)[lt]{\lineheight{1.25}\smash{\begin{tabular}[t]{l}$x_2(t,x_0)$\end{tabular}}}}}%
    \put(0.00987752,0.50115304){\color[rgb]{0,0,0}\rotatebox{90}{\makebox(0,0)[lt]{\lineheight{1.25}\smash{\begin{tabular}[t]{l}$x_1(t,x_0)$\end{tabular}}}}}%
    \put(0.128,0.43){\color[rgb]{0,0,0}\rotatebox{-90}{\makebox(0,0)[lt]{\lineheight{1.25}\smash{\begin{tabular}[t]{l}$x_2(0)=100$\end{tabular}}}}}%
    \put(0.09,0.43){\color[rgb]{0,0,0}\rotatebox{-90}{\makebox(0,0)[lt]{\lineheight{1.25}\smash{\begin{tabular}[t]{l}$x_2(0)=1$\end{tabular}}}}}%
    \put(0.125,0.27){\color[rgb]{0,0,0}\makebox(0,0)[lt]{\lineheight{1.25}\smash{\begin{tabular}[t]{l}$x_2(0)=0.1$\end{tabular}}}}%
    \put(0.11,0.62){\color[rgb]{0,0,0}\rotatebox{-88}{\makebox(0,0)[lt]{\lineheight{1.25}\smash{\begin{tabular}[t]{l}$x_1(0)=100$\end{tabular}}}}}%
    \put(0.077,0.62){\color[rgb]{0,0,0}\rotatebox{-88}{\makebox(0,0)[lt]{\lineheight{1.25}\smash{\begin{tabular}[t]{l}$x_1(0)=1$\end{tabular}}}}}%
    \put(0.125,0.45){\color[rgb]{0,0,0}\makebox(0,0)[lt]{\lineheight{1.25}\smash{\begin{tabular}[t]{l}$x_1(0)=0.1$\end{tabular}}}}%
    \put(0.23,0.3){\color[rgb]{0,0,0}\rotatebox{90}{\makebox(0,0)[lt]{\lineheight{1.25}\smash{\begin{tabular}[t]{l}$T_c=1$\end{tabular}}}}}%
    \put(0.97,0.3){\color[rgb]{0,0,0}\rotatebox{90}{\makebox(0,0)[lt]{\lineheight{1.25}\smash{\begin{tabular}[t]{l}$T_{max}=5.1073$\end{tabular}}}}}%
    }
  \end{picture}%
\endgroup%
\caption{State trajectory of the closed-loop system~\eqref{eq:sosys}-\eqref{eq:uso} with $T_c=1s$ and the considered perturbation.}\label{Fig:SecondOrderControl}
\end{figure}
\end{example}

\section{Conclusion}
\label{Sec:Conclusion}
In this paper, we studied the convergence time of a class of fixed-time stable systems with the aim to provide a new non-conservative upper bound for its settling time. We showed that the well-known upper bound condition for the settling time of this class of systems is often too conservative. To illustrate our claim, we showed how by changing one parameter the upper estimate of the settling time tends to infinity even though the actual settling time is always bounded by a constant $T_c$. To address this problem, we proposed a modification to the classical fixed-time algorithm to transform it into a strongly predefined-time (in which the upper bound for the settling time is set in advance as a parameter of the system and is the lowest upper estimate of the settling time) with strong predefined-time $T_c$. With this result, the Lyapunov inequality, which is a sufficient condition for fixed-time stability, was modified in a way that it becomes predefined-time parametrized by $T_c$. When such inequality becomes equality, $T_c$ becomes the lowest upper estimate of the settling time of the system. At last, predefined-time controllers for first order and second order systems were introduced. Some simulation results have shown the performance of the proposed scheme in terms of settling time estimation compared to existing methods. This is an important contribution toward online differentiators, observers and controllers satisfying prescribed-time objectives.
\appendix
\section{Auxiliary Results}
\begin{definition}\cite[Pg.~87]{Bateman1953}\label{def:beta}
Let $a,b>0$. The Beta function, denoted by $B(a,b)$, is defined as
\begin{align*}
    B(a,b)&=\int_0^1z^{a-1}(1-z)^{b-1}\text{d}z=\frac{\Gamma(a)\Gamma(b)}{\Gamma(a+b)}.
\end{align*}
\end{definition}

\begin{proposition}
\label{prop:integral}
Let $\alpha,\beta,p,q,k>0$, with $pk<1$, $qk>1$. Hence,
\begin{equation}
\int_0^{+\infty} \frac{\mbox{\normalfont d} x}{(\alpha x^p + \beta x^q)^k} = \frac{  \Gamma \left(m_p\right) \Gamma \left(m_q\right) \left(\frac{\alpha}{\beta}\right)^{m_p}}{\alpha^{k}\Gamma (k) (q-p)}.
\label{eq:integral_fixed}
\end{equation}
\end{proposition}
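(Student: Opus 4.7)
The plan is to reduce the integral in \eqref{eq:integral_fixed} to a standard Beta-function integral via two elementary manipulations, and then invoke Definition \ref{def:beta} to rewrite the result in terms of Gamma functions.

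First, I would factor $\alpha x^p$ out of the denominator, using that $(\alpha x^p+\beta x^q)^k=\alpha^k x^{pk}\bigl(1+\tfrac{\beta}{\alpha}x^{q-p}\bigr)^k$ for $x>0$. This gives
\begin{equation*}
\int_0^{+\infty}\frac{\mathrm{d}x}{(\alpha x^p+\beta x^q)^k}
=\frac{1}{\alpha^k}\int_0^{+\infty}\frac{\mathrm{d}x}{x^{pk}\bigl(1+\tfrac{\beta}{\alpha}x^{q-p}\bigr)^k}.
\end{equation*}
Next, I would apply the change of variable $u=\tfrac{\beta}{\alpha}x^{q-p}$, equivalently $x=(\tfrac{\alpha}{\beta}u)^{1/(q-p)}$, noting that $u$ is a smooth increasing bijection from $(0,+\infty)$ onto itself because $q>p$, and that $\mathrm{d}x=\frac{x}{(q-p)u}\,\mathrm{d}u$. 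Substituting and collecting powers of $u$, the integral becomes
\begin{equation*}
\frac{1}{\alpha^k(q-p)}\Bigl(\tfrac{\alpha}{\beta}\Bigr)^{m_p}\int_0^{+\infty}\frac{u^{m_p-1}}{(1+u)^{k}}\,\mathrm{d}u,
\end{equation*}
where the exponent $m_p-1=\tfrac{1-kp}{q-p}-1$ comes out of the bookkeeping $\tfrac{1-pk}{q-p}-1$ after dividing by $u$, and the prefactor $(\alpha/\beta)^{m_p}$ arises from $x^{1-pk}=(\tfrac{\alpha}{\beta}u)^{m_p}$.

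At this point, I would recognize the remaining integral as the classical Beta-function integral on $(0,+\infty)$, namely $\int_0^{+\infty} u^{a-1}(1+u)^{-(a+b)}\,\mathrm{d}u = B(a,b)$ with $a=m_p$ and $a+b=k$, which gives $b=k-m_p=m_q$ after a short algebraic check using $m_p+m_q=\tfrac{(1-kp)+(kq-1)}{q-p}=k$. The hypotheses $pk<1$ and $qk>1$ guarantee $m_p,m_q>0$, so Definition \ref{def:beta} applies and yields $B(m_p,m_q)=\Gamma(m_p)\Gamma(m_q)/\Gamma(k)$. Substituting back into the expression above produces exactly \eqref{eq:integral_fixed}.

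The calculation is essentially mechanical, so there is no serious obstacle; the only part requiring care is the bookkeeping of exponents across the substitution and the verification that the Beta-function parameters $m_p,m_q$ are strictly positive (which pins down why the constraints $pk<1$ and $qk>1$ enter, both for convergence of the original integral at $0$ and $+\infty$ and for applicability of Definition \ref{def:beta}).
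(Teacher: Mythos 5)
Your proposal is correct and follows essentially the same route as the paper: a single change of variables reducing the integral to a Beta function $B(m_p,m_q)$ with $m_p+m_q=k$, then Definition~\ref{def:beta}. The only cosmetic difference is that the paper factors out $\beta x^q$ and substitutes $z=\bigl(1+\tfrac{\alpha}{\beta}x^{p-q}\bigr)^{-1}$ to land directly on the $(0,1)$ Euler integral of Definition~\ref{def:beta}, whereas you land on the $(0,+\infty)$ form $\int_0^{+\infty}u^{m_p-1}(1+u)^{-k}\,\mathrm{d}u$, which equals $B(m_p,m_q)$ only after the additional standard substitution $z=u/(1+u)$ --- a step worth stating explicitly since it is not literally part of Definition~\ref{def:beta}.
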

\begin{proof}
The left-hand side of \eqref{eq:integral_fixed} can be rewritten as
\begin{equation}
\int_0^{+\infty} \frac{\text{d}x}{(\alpha x^p + \beta x^q)^k}=\int_0^{+\infty} \frac{\beta^{-k}x^{-qk}\text{d}x}{\left(\frac{\alpha}{\beta}x^{p-q}+1\right)^k}.
\label{eq:int_proof_1}
\end{equation}
Note that the term  $z = \left(\frac{\alpha}{\beta}x^{p-q}+1\right)^{-1}$ goes to $0$ when $x\rightarrow 0$ and to $1$ when $x\rightarrow +\infty$ if $p-q<0$. Furthermore, using $z$ as a variable change and by Definition~\ref{def:beta}, 
the integral~\eqref{eq:int_proof_1} becomes
\begin{align*}
\int_0^{+\infty} \frac{\text{d}x}{(\alpha x^p + \beta x^q)^k}&=\frac{\left(\frac{\alpha}{\beta}\right)^{m_p}}{\alpha^k(q-p)}\int_0^1 z^{m_p-1}(1-z)^{m_q-1}\text{d}z\\
&=\frac{B\left(m_p,m_q\right)}{\alpha^k(q-p)} \left(\frac{\alpha}{\beta}\right)^{m_p}\\
&= \frac{  \Gamma \left(m_p\right) \Gamma \left(m_q\right)}{\alpha^{k}\Gamma(k)(q-p)}\left(\frac{\alpha}{\beta}\right)^{m_p},
\end{align*}
concluding the proof.
\end{proof}




\end{document}